\newcommand{\R}{\mathbb R}
\newcommand{\N}{\mathbb N}
\newtheorem{thm}{Theorem}[section]
\newtheorem{cor}{Corollary}[section]
\theoremstyle{remark}
\newtheorem*{rmk}{Remark}
\numberwithin{equation}{section}
\begin{document}


\title{Brunn-Minkowski inequality for $\theta$-convolution bodies via Ball's bodies}

\author[D.\,Alonso]{David Alonso-Guti\'errez}
\address{\'Area de an\'alisis matem\'atico, Departamento de matem\'aticas, Facultad de Ciencias, Universidad de Zaragoza, Pedro Cerbuna 12, 50009 Zaragoza (Spain), IUMA}
\email[(David Alonso)]{alonsod@unizar.es}

\author{Javier Mart\'in Go\~ni}
\address{\'Area de an\'alisis matem\'atico, Departamento de matem\'aticas, Facultad de Ciencias, Universidad de Zaragoza, Pedro cerbuna 12, 50009 Zaragoza (Spain), IUMA and Faculty of Computer Science and Mathematics, University of Passau, Innstrasse 33, 94032 Passau, Germany}
\email{j.martin@unizar.es; javier.martingoni@uni-passau.de}

\thanks{The first named author is partially supported by MICINN Project PID-105979-GB-I00 and DGA Project E48\_20R. The second named author is supported by DGA project E48\_20R and the Austrian Science Fund (FWF) Project P32405 \textit{Asymptotic Geometric Analysis and Applications.}}
\begin{abstract}
We consider the problem of finding the best function $\varphi_n:[0,1]\to\R$ such that for any pair of convex bodies $K,L\in\R^n$ the following Brunn-Minkowski type inequality holds
$$
|K+_\theta L|^\frac{1}{n}\geq\varphi_n(\theta)(|K|^\frac{1}{n}+|L|^\frac{1}{n}),
$$
where $K+_\theta L$ is the $\theta$-convolution body of $K$ and $L$. We prove a sharp inclusion of the family of Ball's bodies  of an $\alpha$-concave function in its super-level sets in order to provide the best possible function in the range $\left(\frac{3}{4}\right)^n\leq\theta\leq1$, characterizing the equality cases.
\end{abstract}

\date{\today}
\maketitle

\section{Introduction}

It is well known that for any pair of convex bodies (i.e., compact convex sets with non-empty interior) $K,L\subseteq\R^n$, their Minkowski sum $K+L$, defined as
$$
K+L:=\{x+y\,:x\in K, y\in L\}=\{z\in\R^n\,:\,K\cap(z-L)\neq\emptyset\},
$$
is a convex body whose volume (or $n$-dimensional Lebesgue measure) $|\cdot|$ verifies, by Brunn-Minkowski inequality (see \cite[Theorem 7.1.1]{Sch}), that
\begin{equation}\label{eq:BM}
|K+L|^\frac{1}{n}\geq|K|^\frac{1}{n}+|L|^\frac{1}{n}.
\end{equation}

In \cite{AJV}, the authors considered, for any $0\leq\theta\leq1$ the $\theta$-convolution bodies of a pair of convex bodies $K,L\subseteq\R^n$, defined as
$$
K+_\theta L:=\{z\in K+L\,:|K\cap (z-L)|\geq\theta M(K,L)\},
$$
where $M(K,L)=\max_{z\in\R^n}|K\cap(z-L)|$, and studied the problem of obtaining the best possible function $\varphi_n:[0,1]\to\R$ such that for any pair of convex bodies $K,L\subseteq\R^n$ one has the following Brunn-Minkowski type inequality
\begin{equation}\label{eq:BM_Theta}
|K+_\theta L|^\frac{1}{n}\geq\varphi_n(\theta)(|K|^\frac{1}{n}+|L|^\frac{1}{n}).
\end{equation}
The authors proved that for any pair of convex bodies $\frac{K+_\theta L}{1-\theta^\frac{1}{n}}$ is an increasing family of convex bodies in $\theta$ and, as a consequence of Brunn-Minkowski inequality, $$\varphi_n(\theta)\geq1-\theta^\frac{1}{n}$$ for every $\theta\in[0,1]$. Therefore, for every pair of convex bodies $K,L\subseteq\R^n$
\begin{equation}\label{BM_Convolution_Old}
|K+_\theta L|^\frac{1}{n}\geq(1-\theta^\frac{1}{n})(|K|^\frac{1}{n}+|L|^\frac{1}{n}).
\end{equation}
It was also shown in \cite{AJV} that the increasing sequence of convex bodies $\frac{K+_\theta L}{1-\theta^\frac{1}{n}}$ remains constant if and only if $K=-L$ is an $n$-dimensional simplex, in which case there is no equality in Brunn-Minkowski inequality \eqref{eq:BM}. The purpose of this paper is to improve the estimate of the function $\varphi_n$. We will prove the following:

\begin{thm}\label{thm:BM_Convolution_improved}
Let $K,L\subseteq\R^n$ be convex bodies. Then, for every $\left(\frac{3}{4}\right)^n\leq\theta\leq 1$ we have that
$$
|K+_\theta L|^\frac{1}{n}\geq \frac{1}{2}{{2n}\choose{n}}^\frac{1}{n}(1-\theta^\frac{1}{n})\left(|K|^\frac{1}{n}+|L|^\frac{1}{n}\right)
$$
and, for every  $0\leq\theta\leq \left(\frac{3}{4}\right)^n$,
$$
|K+_\theta L|^\frac{1}{n}\geq \left(1-\left(\frac{4}{3}-\frac{1}{6}{{2n}\choose{n}}^\frac{1}{n}\right)\theta^\frac{1}{n}\right)\left(|K|^\frac{1}{n}+|L|^\frac{1}{n}\right).
$$
Moreover, given any $\left(\frac{3}{4}\right)^n\leq\theta<1$ there is equality if and only if $K=-L$ is an $n$-dimensional simplex.
\end{thm}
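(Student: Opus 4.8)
The plan is to reduce the $\theta$-convolution body inequality to a statement about log-concave (or, more generally, $\alpha$-concave) functions via the Rogers–Shephard machinery used in \cite{AJV}. The key object is the function $f(z) = |K\cap(z-L)|$, which by Brunn–Minkowski in one higher dimension is $\frac{1}{n}$-concave on $K+L$, with maximum $M(K,L)$. Then $K+_\theta L$ is exactly the super-level set $\{f \geq \theta M(K,L)\}$. Writing $g = (f/M(K,L))^{1/n}$, which is a concave function on its support with $\max g = 1$, we have $K+_\theta L = \{g \geq \theta^{1/n}\}$, so the problem becomes: bound $|\{g\geq t\}|^{1/n}$ from below in terms of $t=\theta^{1/n}$ and the geometry of $g$. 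The normalization $|K+_\theta L|/(1-\theta^{1/n})$ being an increasing family is the statement that $t\mapsto |\{g\geq t\}|^{1/n}/(1-t)$ is increasing; I would combine this with information at $t\to 1$ (where the level sets shrink to the "vertex" of $g$) and a volume inequality à la Rogers–Shephard giving $|\operatorname{supp} g| \leq \binom{2n}{n}|K_g|$ for a suitable Ball body $K_g$, which is where the binomial coefficient $\binom{2n}{n}$ enters.

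**Next I would** invoke the sharp inclusion of Ball's bodies promised in the abstract: for an $\alpha$-concave function $h$ with $\alpha > 0$, the Ball body $K_p(h) = \{x : \int_0^\infty h(rx)\,r^{p-1}dr \geq \text{something}\}$ (suitably normalized so $\max h = h(0) = 1$) is contained in the super-level set $\{h \geq c\}$ for the explicit constant $c$ coming from the one-dimensional extremal case, which is a truncated power/affine function. Applied to $h = (f/M)^{n}$-type renormalizations, this inclusion transfers the Rogers–Shephard-type volume comparison into a comparison between $|\{f\geq\theta M\}|$ and $|\operatorname{supp} f| = |K+L|$, and then \eqref{eq:BM} closes the estimate. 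Concretely, for $\theta \geq (3/4)^n$ the extremal one-dimensional profile is affine (the simplex case), producing the factor $\frac12\binom{2n}{n}^{1/n}(1-\theta^{1/n})$; for $\theta \leq (3/4)^n$ one uses instead the trivial bound $\varphi_n(\theta)\geq\varphi_n((3/4)^n)$ coming from monotonicity of the normalized family together with the value at $\theta=(3/4)^n$, which after simplification yields the stated linear-in-$\theta^{1/n}$ expression $1-\bigl(\frac43-\frac16\binom{2n}{n}^{1/n}\bigr)\theta^{1/n}$. One should double-check that the two pieces match continuously at $\theta=(3/4)^n$, which they do since $\frac12\binom{2n}{n}^{1/n}(1-(3/4)) = \frac18\binom{2n}{n}^{1/n}$ equals the value of the second expression there.

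**For the equality characterization**, I would trace back through the chain of inequalities for $(3/4)^n \leq \theta < 1$: equality forces equality in the Ball body inclusion (which by the sharp inclusion theorem forces $f$, along each ray from its maximizer, to be a power of an affine function with the extremal exponent), equality in the Rogers–Shephard-type volume estimate (forcing $f$ to essentially be the "tent" function of a simplex), and equality in the one-higher-dimensional Brunn–Minkowski used to get concavity of $f^{1/n}$ (forcing $K$ and $z-L$ to be homothetic for the relevant $z$). Putting these together should force $K = -L$ to be an $n$-dimensional simplex, matching the rigidity already known for the normalized family in \cite{AJV}. Conversely, a direct computation with $K = -L = \Delta$ a simplex — where $f(z) = |K\cap(z-L)|$ is explicitly the $n$-fold "spline"/tent shaped function and $K+_\theta L$ is a dilate of $\Delta$ — verifies equality; this is the routine check. \textbf{The main obstacle} I anticipate is proving the sharp Ball-body inclusion itself with the correct constant and its equality cases: reducing to one dimension is standard, but identifying the extremal one-dimensional $\alpha$-concave profile and showing the constant degrades correctly as $\theta$ crosses $(3/4)^n$ (equivalently, as the optimal exponent in the power-of-affine profile transitions) requires a careful optimization, and the equality analysis must be tight enough to survive being pushed back up to $\R^n$.
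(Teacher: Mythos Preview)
Your outline for the range $(3/4)^n \le \theta < 1$ is essentially the paper's argument: translate so that $M(K,L)=|K\cap(-L)|$, apply the sharp Ball-body inclusion (Theorem~\ref{thm:BallsBodiesInclusion}) to the $\tfrac{1}{n}$-concave function $g_{K,L}$ with $p=n$, take volumes using $|K_n(g_{K,L})|=|K||L|/M(K,L)$, and finish with $M(K,L)\le\min\{|K|,|L|\}$. Your equality discussion is also in the right spirit; in the paper it is even shorter, since the rigidity clause of Theorem~\ref{thm:BallsBodiesInclusion} already forces $g_{K,L}(x)=M(K,L)(1-\|x\|_{K+L})^n$, whence $K+_\theta L=(1-\theta^{1/n})(K+L)$ for all $\theta$ and \cite[Proposition~2.8]{AJV} gives $K=-L$ a simplex.

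There is, however, a genuine gap in your treatment of the range $0\le\theta\le(3/4)^n$. The bound you propose, ``$\varphi_n(\theta)\ge\varphi_n((3/4)^n)$ by monotonicity,'' is correct but yields only the \emph{constant} $\frac{1}{8}\binom{2n}{n}^{1/n}$, not the linear expression $1-\bigl(\tfrac{4}{3}-\tfrac{1}{6}\binom{2n}{n}^{1/n}\bigr)\theta^{1/n}$. These agree at $\theta=(3/4)^n$ (as you checked), but at $\theta=0$ the linear expression equals $1$ while your constant is at most $1/2$; no ``simplification'' turns one into the other. Monotonicity of the normalized family $\frac{K+_\theta L}{1-\theta^{1/n}}$ goes the wrong way here (it gives upper bounds for small $\theta$), and plain monotonicity of $|K+_\theta L|$ only gives the constant.

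What the paper actually does for small $\theta$ is a concavity-based interpolation at the level of sets, not a monotonicity argument. Writing $\theta^{1/n}=\tfrac{4}{3}\theta^{1/n}\cdot\tfrac{3}{4}+\bigl(1-\tfrac{4}{3}\theta^{1/n}\bigr)\cdot 0$ and using that $g_{K,L}^{1/n}$ is concave on $K+L$, one gets the Minkowski-sum inclusion
\[
K+_\theta L \;=\; L_{\theta^{1/n}}(g_{K,L}) \;\supseteq\; \tfrac{4}{3}\theta^{1/n}\,L_{3/4}(g_{K,L}) \;+\; \bigl(1-\tfrac{4}{3}\theta^{1/n}\bigr)\,L_{0}(g_{K,L}).
\]
Now $L_{3/4}\supseteq \tfrac{1}{4}\binom{2n}{n}^{1/n}K_n(g_{K,L})$ by Theorem~\ref{thm:BallsBodiesInclusion} at $t=\tfrac{1}{4}$, and $L_0=K+L$. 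Applying Brunn--Minkowski to the right-hand side and then the same elementary estimates as before produces the stated linear-in-$\theta^{1/n}$ bound. This step is where Brunn--Minkowski and the value $\varphi_n(0)\ge 1$ genuinely enter the small-$\theta$ regime; your proposal omits it.
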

\begin{rmk}
The constant $\frac{1}{2}{{2n}\choose{n}}^\frac{1}{n}$ is asymptotically $2$, so the result improves on \eqref{BM_Convolution_Old} for values of $\theta$ close to $1$, in which case there is equality whenever $K=-L$ is an $n$-dimensional simplex. Besides,
$\left(\frac{4}{3}-\frac{1}{6}{{2n}\choose{n}}^\frac{1}{n}\right)$ is asymptotically  $\frac{2}{3}$. Therefore, this result also improves on \eqref{BM_Convolution_Old}, for small values of $\theta$.
\end{rmk}

\begin{rmk}
The change of behavior in the estimate at $\theta_0=\left(\frac{3}{4}\right)^n$ is due to the method we use in order to prove Theorem \ref{thm:BM_Convolution_improved}. However, it is clear that the estimate obtained for the function $\varphi_n(\theta)$ for $\theta_0\leq\theta\leq1$ cannot hold in the whole range $0\leq\theta\leq1$, as this would lead, taking limit as $\theta$ tends to $0$, to Brunn-Minkowski's inequality \eqref{eq:BM} with an extra factor $\frac{1}{2}{{2n}\choose{n}}^\frac{1}{n}$ which is asymptotically $2$ and such inequality is false if $K=L$.
\end{rmk}

In order to prove Theorem \ref{thm:BM_Convolution_improved} we observe that the $\theta$-convolution bodies of two convex bodies $K,L\subseteq\R^n$ are the super-level sets of the function $\tilde{g}_{K,L}: \R^n\to[0,1]$ given by
$\tilde{g}_{K,L}(z)=\frac{|K\cap(z-L)|}{M(K,L)}$, which is $\frac{1}{n}$-concave (i.e., $\tilde{g}_{K,L}^\frac{1}{n}$ is concave on its support, $K+L$) and, in particular, is log-concave. Given any integrable log-concave function $g:\R^n\to[0,\infty)$ (i.e., $\log g:\R^n\to[-\infty,\infty)$ is concave) with $g(0)\neq0$, Ball \cite{Ba} defined the following family of convex bodies associated to it $(K_p(g))_{p>0}$:
$$
K_p(g):=\left\{x\in\R^n\,:\,p\int_0^\infty r^{p-1}g(rx)dr\geq g(0)\right\}.
$$
In \cite[Lemma 2.3.2, Remark 2.6]{ABG}, the authors proved, following the ideas in \cite{KM}, the following inclusion relation between Ball's bodies and super-level sets of a log-concave function in a certain range of the parameters involved: If $g:\R^n\to[0,\infty)$ is an integrable log-concave function with $\Vert g\Vert_\infty=g(0)$, then for any $p>0$ and  any $0<t<\frac{p}{e}$
\begin{equation}\label{eq:Inclusion_Log_Concave}
\frac{t}{\Gamma(1+p)^\frac{1}{p}}K_p(g)\subseteq \{x\in\R^n\,:\,g(x)=e^{-t}\Vert g\Vert_\infty\}.
\end{equation}
We will obtain a sharper inclusion relation between the family Ball's bodies and the super-level sets of an $\alpha$-concave function $g$ (i.e., $g^\alpha$ is concave on its support) which will allow us to prove Theorem \ref{thm:BM_Convolution_improved}. More precisely, denoting for any $0\leq r\leq 1$ by $L_r$ the super-level set of an $\alpha$-concave function $g$ given by
 \begin{eqnarray}\label{eq:SuperLevelSets}
L_r(g)&=&\{x\in\textrm{supp}(g)\,:\,g^\alpha(x)\geq r\Vert g\Vert_\infty^\alpha\}\cr
&=&\{x\in\textrm{supp}(g)\,:\,g(x)\geq r^{\frac{1}{\alpha}}\Vert g\Vert_\infty\},
\end{eqnarray}
and, denoting by $\partial K$  and by $\rho_K$ the boundary of the convex body $K\subseteq\R^n$ containing the origin in its interior and its radial function (see definition below), and for any $x,y>0$ the generalized binomial coefficient, which is defined in terms of the gamma function as $\displaystyle{ {x\choose y} := \frac{\Gamma{(1+x)}}{\Gamma{(1+y)}\Gamma{(1+x-y)}}}$, we will prove the following:
\begin{thm}\label{thm:BallsBodiesInclusion}
Let $p>0$ and let $K\subseteq\R^n$ be a convex body with $0\in K$ and let $g:K\to[0,\infty)$ be a continuous $\alpha$-concave function, with $\alpha>0$, such that $\Vert g\Vert_\infty=g(0)>0$ and such that $g(x)=0$ for every $x\in\partial K$. Then, understanding that $g(x)=0$ for every $x\not\in K$, we have that for every $t\in\left[0,\frac{p\alpha}{(1+p\alpha)^{1+\frac{1}{p\alpha}}}\right]$
$$
t{{p+\frac{1}{\alpha}}\choose{p}}^\frac{1}{p}K_p(g)\subseteq L_{1-t}(g).
$$
Moreover, for any $t\in \left(0,\frac{p\alpha}{(1+p\alpha)^{1+\frac{1}{p\alpha}}}\right]$ there is equality if and only if $g(x)=\Vert g\Vert_\infty\left(1-\Vert x\Vert_K\right)^\frac{1}{\alpha}$ for every $x\in K$. Furthermore, given $u\in S^{n-1}$, if there exists $r\in[0,\rho_K(u)]$ such that $g(ru)\neq\Vert g\Vert_\infty\left(1-\Vert ru\Vert_K\right)^\frac{1}{\alpha}$, we have that there exists $\varepsilon>0$ such that for every $t\in \left(0,\frac{p\alpha}{(1+p\alpha)^{1+\frac{1}{p\alpha}}}\right]$
$$
t{{p+\frac{1}{\alpha}}\choose{p}}^\frac{1}{p}(\rho_{K_p(g)}(u)+\varepsilon)\leq \rho_{L_{1-t}(g)}(u).
$$
\end{thm}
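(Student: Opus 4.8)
The plan is to reduce the inclusion to a one–dimensional inequality along each ray, and then to a calculus estimate for concave functions. Fix $u\in S^{n-1}$ and put $R=\rho_K(u)$. Since $g$ is $\alpha$-concave with $\Vert g\Vert_\infty=g(0)$, the function $\phi_u(r):=\bigl(g(ru)/g(0)\bigr)^\alpha$ is continuous, non-increasing and concave on $[0,R]$, with $\phi_u(0)=1$ and $\phi_u(R)=0$ (the latter because $g$ vanishes on $\partial K$). The substitution appearing in the definition of $K_p(g)$ gives $\rho_{K_p(g)}(u)^p=p\int_0^R r^{p-1}\phi_u(r)^{1/\alpha}\,dr$, while $\rho_{L_s(g)}(u)=\sup\{r\ge0:\phi_u(r)\ge s\}$ for $s\in[0,1]$. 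Hence the inclusion $t\binom{p+1/\alpha}{p}^{1/p}K_p(g)\subseteq L_{1-t}(g)$ is equivalent to the radial inequality $t\binom{p+1/\alpha}{p}^{1/p}\rho_{K_p(g)}(u)\le\rho_{L_{1-t}(g)}(u)$ holding for every $u$, and the last displayed inequality of the statement is exactly its one–directional strengthening.

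Next I would record the distribution-function identity
$$
\rho_{K_p(g)}(u)^p=\frac1\alpha\int_0^1 s^{\frac1\alpha-1}\rho_{L_s(g)}(u)^p\,ds ,
$$
which follows by writing $\phi_u(r)^{1/\alpha}=\frac1\alpha\int_0^1 s^{\frac1\alpha-1}\mathbf{1}_{\{\phi_u(r)>s\}}\,ds$, observing that $\{r:\phi_u(r)>s\}=\bigl[0,\rho_{L_s(g)}(u)\bigr)$ for a.e.\ $s$ by monotonicity and continuity of $\phi_u$, and applying Fubini's theorem. Setting $\lambda(s):=\rho_{L_s(g)}(u)$, a continuous, non-increasing, concave function on $[0,1]$ with $\lambda(0)=R$, the radial inequality becomes
$$
t^p\binom{p+\tfrac1\alpha}{p}\,\frac1\alpha\int_0^1 s^{\frac1\alpha-1}\lambda(s)^p\,ds\le\lambda(1-t)^p .
$$
A Beta-function computation shows that for the affine profile $\lambda_*(s)=\lambda(1-t)\tfrac{1-s}{t}$ — which along $u$ corresponds to $g(ru)=\Vert g\Vert_\infty(1-\Vert ru\Vert_K)^{1/\alpha}$ — this is an equality, so the content is that $\lambda_*$ maximises the left-hand side among admissible $\lambda$.

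To prove this I would use the concavity of $\lambda$ only through the supporting line at $s=1-t$: with $m:=-\lambda'((1-t)^+)\ge0$ one has $\lambda(s)\le\lambda(1-t)+m\bigl((1-t)-s\bigr)$ for all $s\in[0,1]$, and evaluating at $s=1$ forces $m\le(\lambda(1-t)-\lambda(1))/t\le\lambda(1-t)/t$. Substituting the linear bound, the left-hand side is at most $\Theta(m):=t^p\binom{p+\frac1\alpha}{p}\frac1\alpha\int_0^1 s^{\frac1\alpha-1}\bigl(\lambda(1-t)+m((1-t)-s)\bigr)^p\,ds$, where $\Theta(\lambda(1-t)/t)=\lambda(1-t)^p$ and $\Theta$ is convex in $m$ when $p\ge1$ and concave when $p\le1$. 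It therefore suffices to show $\Theta$ is maximised on the admissible interval of slopes at its right endpoint; after normalising $\lambda(1-t)=1$ this is a one–variable statement, settled by computing $\Theta$ and $\Theta'$ at the two ends of the interval. The right endpoint is the maximiser for $t$ up to the stated threshold $\tfrac{p\alpha}{(1+p\alpha)^{1+1/(p\alpha)}}=\max_{x\in[0,1]}x^{1/(p\alpha)}(1-x)$; for larger $t$ an interior slope may beat it, which is the change of behaviour mentioned after the statement.

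Finally, the equality and quantitative assertions come by retracing the two inequalities. The distribution-function identity is an equality; the second step is an equality only if $\lambda$ coincides with its supporting line at $1-t$, hence is affine, and only if moreover $m=\lambda(1-t)/t$, that is $\lambda(1)=0$; so $\lambda=\lambda_*$, meaning $\phi_u$ is affine and $g(ru)=\Vert g\Vert_\infty(1-\Vert ru\Vert_K)^{1/\alpha}$ on that ray — and requiring this for every $u$ forces $g(x)=\Vert g\Vert_\infty(1-\Vert x\Vert_K)^{1/\alpha}$, which is indeed $\alpha$-concave since $\Vert\cdot\Vert_K$ is convex; conversely this $g$ gives equality in every direction and so in the inclusion. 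If instead $g(r_0u)\ne\Vert g\Vert_\infty(1-\Vert r_0u\Vert_K)^{1/\alpha}$ for some $r_0$, then $\lambda$ is not affine, hence lies strictly below its supporting line at $1-t$ on a nonempty open set, so the ratio $\int_0^1 s^{\frac1\alpha-1}(\lambda(1-t)+m((1-t)-s))^p\,ds\big/\int_0^1 s^{\frac1\alpha-1}\lambda(s)^p\,ds$ stays bounded away from $1$ for all $t\in(0,t_0]$ (it depends continuously on $t$, and since $\lambda(s)\le m_1(1-s)$ with $m_1=-\lambda'(1^-)>0$ it has a limit strictly above $1$ as $t\to0^+$); this yields a multiplicative improvement and hence the stated $\varepsilon>0$. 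The main difficulty lies in the one–variable optimisation of the third paragraph — above all the case $p\le1$, where $\Theta$ is concave in $m$ — and in making the deficit in the non-extremal case uniform in $t$ so that a single $\varepsilon$ works.
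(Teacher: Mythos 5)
Your reduction to a one--dimensional problem and the distribution-function identity
$\rho_{K_p(g)}(u)^p=\tfrac1\alpha\int_0^1 s^{\frac1\alpha-1}\rho_{L_s(g)}(u)^p\,ds$
are both correct, and they set up a genuinely different route from the paper's. Where you then place a supporting line of $\lambda(s)=\rho_{L_s(g)}(u)$ at the \emph{fixed} abscissa $s=1-t$ and optimise over the subgradient slope $m$, the paper instead anchors the supporting line of $v_u(r)=g^\alpha(ru)$ at the \emph{moving} point $r_0(q)$ where $r\mapsto r^{q\alpha}v_u(r)$ is maximal, and lets the auxiliary parameter $q\in(0,p]$ generate the whole range of $t$ via $t=q\alpha/(1+q\alpha)^{1+1/(p\alpha)}$. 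This choice of anchor is what makes the paper's argument close without any optimisation: the slope $-\tfrac{q\alpha}{r_0}v_u(r_0)$ is forced on them by the maximisation, and two direct uses of concavity (the chord $0\to r_0$, and monotonicity of the left derivative on $[0,r_0]$) produce both the upper bound on $\rho_{K_p(g)}^p$ and the membership of the scaled point in $L_{1-t}(g)$. Your version moves the whole burden onto a single one-variable inequality.

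That is exactly where the proposal has a real gap: you do not prove that $\Theta(m)\le\lambda(1-t)^p$ on the admissible slope interval, and you yourself flag this as ``the main difficulty.'' The inequality is not automatic. After normalising $\lambda(1-t)=1$ one finds $\Theta(1/t)=1$ (Beta-function identity), $\Theta'(1/t)\propto\tfrac{p\alpha}{1+p\alpha}-t$, and $\Theta(0)=t^p\binom{p+1/\alpha}{p}$. For $p\le1$ concavity of $\Theta$ plus $\Theta'(1/t)\ge0$ does finish it (and in fact gives a range up to $\tfrac{p\alpha}{1+p\alpha}$, larger than the theorem's threshold); but for $p\ge1$ convexity reduces the question to $\Theta(0)\le1$, i.e.\ $t^p\binom{p+1/\alpha}{p}\le1$, which is a genuine log-$\Gamma$ subadditivity statement (it does hold, via concavity of $x\mapsto\log\Gamma(1+x)-x\log x$, but you must prove it). None of this is in the proposal; the threshold you wrote down does not even arise naturally from either of the two constraints your argument produces, which suggests the optimisation was not actually carried out. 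So as it stands the main inclusion is not established.

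The quantitative part is also not complete. Your argument for a uniform $\varepsilon$ is a continuity-plus-limit sketch in which the normalisation $\lambda(1-t)=1$ itself varies with $t$, and the claimed limit as $t\to0^+$ is asserted rather than shown; moreover you produce a multiplicative deficit while the theorem asserts an additive one, and the translation between them needs the radial functions to be bounded and bounded away from zero, which should be said. The paper handles this by a compactness/Fatou argument on the family of truncated supporting lines $\tau_{u,q}$ over $q\in(0,p]$ (including continuity of $r_0(q)$ and the fact that $r_0(q)$ stays away from $l_u$), which is more work but actually delivers a single $\varepsilon$ for all $t$ in the range. In summary: plausible alternative strategy, likely to work and possibly to give a slightly wider $t$-range, but the key one-variable inequality and the uniformity of $\varepsilon$ are both left unproved.
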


The paper is organised as follows. In Section \ref{sec:Preliminaries} we will provide the necessary definitions and results that we need to prove our results. In Section \ref{sec:Inclusion} we will provide the proof of Theorem \ref{thm:BallsBodiesInclusion}. Finally, in Section \ref{sec:BM_ThetaConvolution} we will prove Theorem \ref{thm:BM_Convolution_improved}.
\section{Preliminaries} \label{sec:Preliminaries}

\subsection{Notation}

Given a convex body $K\subseteq\R^n$ with $0\in\textrm{int}K$, we will denote by $\rho_K$ the radial function $\rho_K:S^{n-1}\to[0,\infty)$ given by $\rho_K(u)=\max\{\lambda\geq0\,:\,\lambda u\in K\}$, where $S^{n-1}$ denotes the $(n-1)$-dimensional Euclidean sphere in $\R^n$, that is, $S^{n-1}=\{u\in\R^n\,:\,\Vert u\Vert_2=1\}$. It is well known that  given two convex bodies $K_1,K_2\in\R^n$ we have that $K_1\subseteq K_2\Leftrightarrow\rho_{K_1}(u)\leq\rho_{K_2}(u)$ for every $u\in S^{n-1}$. $\Vert\cdot\Vert_K$ will denote the Minkowski gauge, defined as
$$
\Vert x\Vert_K=\inf\{\lambda>0\,:\,x\in\lambda K\}.
$$
Notice that for every $u\in S^{n-1}$ we have that $\Vert u\Vert_K=\frac{1}{\rho_K(u)}$. $\chi_K$ will denote the characteristic function of a convex body $K$  and $K-K$ will always denote the difference body $K+(-K)$. Whenever $K\subseteq\R^n$ is a $k$-dimensional set contained in an affine $k$-dimensional subspace, $|K|$ will denote its $k$-dimensional Lebesgue measure. $B_2^n$ will stand for the Euclidean unit ball, $\Vert\cdot\Vert_2$ for the Euclidean norm and $\Delta^n$ will stand for the regular simplex.
\subsection{Ball's bodies}

A log-concave function $g:\R^n\to[0,\infty)$ is a function of the form $g(x)=e^{-u(x)}$ with $u:\R^n\to(-\infty,\infty]$ a convex function. The family of log-concave functions plays an extremely important role in the study of problems related to distribution of volume in convex bodies. In \cite{Ba}, Ball introduced a family of convex bodies $(K_p(g))_{p>0}$ associated to log-concave functions verifying $g(0)>0$. More precisely, for any measurable (not necessarily log-concave) $g:\R^n\to[0,\infty)$ such that $g(0)>0$ and $p>0$, $K_p(g)$ is defined as
$$
K_p(g):=\left\{x\in\R^n\,:\,p\int_0^\infty r^{p-1}g(rx)dr\geq g(0)\right\}.
$$
$K_p(g)$ is a star set with center $0$ whose radial function is given by
$$
\rho_{K_p(g)}^p(u)=\frac{p}{g(0)}\int_0^\infty r^{p-1}g(ru)dr,\quad\forall u\in S^{n-1}.
$$
It is well known that for any integrable log-concave function with $g(0)>0$,  $K_p(g)$ is convex for every $p>0$ and, by integration in polar coordinates, we have that $\displaystyle{|K_n(g)|=\int_{\R^n}\frac{g(x)}{g(0)}dx}$.
We refer the reader to \cite[Section 2.5]{BGVV} for more information on Ball's bodies.

\subsection{The generalized covariogram function}\label{subsec:Covariogram}

Given a convex body $K\subseteq\R^n$, its covariogram function is the function $g_K:K-K\to[0,\infty)$ given by
$$
g_K(z)=|K\cap(z+K)|.
$$
Through this paper we will consider, given a pair of convex bodies $K,L\subseteq\R^n$, the generalized covariogram function defined as $g_{K,L}:K+L\to[0,\infty)$ given by
$$
g_{K,L}(z)=|K\cap(z-L)|=\chi_K*\chi_L(z),
$$
where $\chi_K*\chi_L$ denotes the convolution of the functions $\chi_K$ and $\chi_L$. Notice that for any convex body $g_K=g_{K,-K}$. As a consequence of Brunn-Minkowski inequality \eqref{eq:BM}, we have that for any pair of convex bodies $K,L\subseteq\R^n$, $g_{K,L}$ is $\frac{1}{n}$-concave and then, for any $\theta\in[0,1]$ we have that
$$
K+_\theta L=L_{\theta^\frac{1}{n}}(g_{K,L}),
$$
where $L_{\theta^\frac{1}{n}}(g_{K,L})$ is defined by \eqref{eq:SuperLevelSets}. Besides, (see \cite[Proposition 2.1]{AJV}) for any $x\in\R^n$ and any $\theta\in[0,1]$
\begin{equation}\label{eq:translation}
(x+K)+_\theta L=x+(K+_\theta L)
\end{equation}
and for any pair of convex bodies $K,L\subseteq\R^n$ one has that
$$
\int_{\R^n}\frac{g_{K,L}(z)}{\Vert g_{K,L}\Vert_\infty}dz=\frac{|K||L|}{M(K,L)}.
$$
It was also proved in \cite{AJV} that $\frac{K+_\theta L}{1-\theta^\frac{1}{n}}$ is an increasing family of convex bodies in $\theta\in[0,1]$ and (see \cite[Proposition 2.8]{AJV}) that $K+_\theta L=(1-\theta^\frac{1}{n})(K+L)$ for every $0\leq\theta\leq1$ if and only if $K=-L$ is a simplex.

\subsection{The polar projection body and Zhang's inequality}\label{sec:Zhang}

Given a convex body $K\subseteq\R^n$, its polar projection body $\Pi^*K$ is defined as the unit ball of the norm given by
$$
\Vert x\Vert_{\Pi^*K}=\Vert x\Vert_2|P_{x^\perp}K|.
$$
It is well known that for any convex body, $|K|^{n-1}|\Pi^*K|$ is an affine invariant quantity that verifies Petty projection inequality \cite{P} (also known as the affine isoperimetric inequality) :
$$
|K|^{n-1}|\Pi^*K|\leq|B_2^n|^{n-1}|\Pi^*B_2^n|,
$$
with equality if and only if $K$ is an ellipsoid. In \cite{Z}, Zhang proved the following reverse inequality for any convex body $K\subseteq\R^n$:
\begin{equation}\label{eq:ZhangsInequality}
|K|^{n-1}|\Pi^*K|\geq|\Delta^n|^{n-1}|\Pi^*\Delta^n|=\frac{1}{n^n}{{2n}\choose{n}},
\end{equation}
with equality if and only if $K$ is a simplex (see also \cite{GZ} for another proof).

In \cite{T}, Tsolomitis studied the existence of the behavior of limiting convolution bodies
$$
C_\alpha (K,L):=\lim_{\theta\to1^-}\frac{K+_\theta L}{(1-\theta)^\alpha}
$$
for symmetric convex bodies $K$ and $L$, and some exponent $\alpha$, giving some regularity conditions
under which the above limit is non-degenerated for some $\alpha$. Taking into account that for any convex body $K\subseteq\R^n$, $C_1(K,-K)=|K|\Pi^*K$,  in \cite[Theorem 4.6]{AJV}, the authors showed that Zhang's inequality can be extended to
\begin{equation}\label{eq:ZhangTwoBodies}
|C_1(K,L)|\geq\frac{1}{n^n}{{2n}\choose{n}}\frac{|K||L|}{M(K,L)},
\end{equation}
for any pair of convex bodies $K,L\subseteq\R^n$  such that $M(K,L)=|K\cap(-L)|$, with equality if and only if $K=-L$ is a simplex.

\section{An inclusion relation between Ball's bodies and superlevel sets}\label{sec:Inclusion}

In this section we are going to prove Theorem \ref{thm:BallsBodiesInclusion}, from which we will derive Theorem \ref{thm:BM_Convolution_improved} in the following section.

\begin{proof}[Proof of Theorem \ref{thm:BallsBodiesInclusion}]
Let us assume, without loss of generality that $\Vert g\Vert_\infty=g(0)=1$. Otherwise, consider the function $\frac{g}{\Vert g\Vert_\infty}$. Let us denote, for any $u\in S^{n-1}$, $l_u=\sup\{r>0\,:\,g(ru)>0\}=\rho_K(u)$, $v_u:[0,l_u]\to[0,1]$ the function defined as $v_u(r)=g^\alpha(ru)$, which is concave on $[0,l_u]$, and, for any $q>0$, let $\phi_u:[0,l_u]\to[0,\infty)$ the function defined as
$$
\phi_u(r)=r^{q\alpha}v_u(r)=r^{q\alpha}g^\alpha(ru).
$$
Notice that since $\log\phi_u$ is strictly concave on $(0,l_u)$, $\displaystyle{\lim_{r\to0^+}\log\phi_u(r)=-\infty}$, and $\displaystyle{\lim_{r\to l_u^-}\log\phi_u(r)=-\infty}$, $\phi_u$ attains a unique maximum at some $r_0=r_0(q)\in(0,l_u)$.
Therefore, denoting by $(\phi_u)_{-}(r_0)$ and $(\phi_u)_{+}(r_0)$ the lateral derivatives of $\phi_u$ at $r_0$ we have that
\begin{itemize}
\item $0\leq(\phi_u)_{-}(r_0)=r_0^{q\alpha}\left(\frac{q\alpha}{r_0}v_u(r_0)+(v_u)_{-}(r_0)\right)$,
\item $0\geq(\phi_u)_{+}(r_0)=r_0^{q\alpha}\left(\frac{q\alpha}{r_0}v_u(r_0)+(v_u)_{+}(r_0)\right)$.
\end{itemize}
Then,
\begin{itemize}
\item $(v_u)_{-}(r_0)\geq-\frac{q\alpha}{r_0}v_u(r_0)$,
\item $(v_u)_{+}(r_0)\leq-\frac{q\alpha}{r_0}v_u(r_0)$.
\end{itemize}
Notice that if $v_u$ is an affine function then necessarily for every $q>0$ we have that $v_u(r)=v_u(r_0)\left(1-\frac{q\alpha}{r_0}(r-r_0)\right)$. We will denote this affine function by
$$
\tau_{u,q}(r)=v_u(r_0(q))\left(1-\frac{q\alpha}{r_0(q)}(r-r_0(q))\right),\quad\forall r\in[0,l_u].
$$

For any $q>0$, the graph of the function $\tau_{u,q}$ is a supporting line of the hypograph of $v_u$ and then
$$
v_u(r)\leq \tau_{u,q}(r)=v_u(r_0)\left(1-\frac{q\alpha}{r_0}(r-r_0)\right),\quad\forall r\in[0,l_u].
$$
Thus, for any $p,q>0$
\begin{eqnarray}\label{eq:InequalityRhoKp}
\rho_{K_p(g)}^p(u)&=&p\int_0^{\infty}r^{p-1}g(ru)dr=p\int_0^{l_u}r^{p-1}v_u^\frac{1}{\alpha}(r)dr\cr
&\leq& pv_u^\frac{1}{\alpha}(r_0)\int_0^{l_u}r^{p-1}\left(1-\frac{q\alpha}{r_0}(r-r_0)\right)^\frac{1}{\alpha}dr\cr
&\leq&pg(r_0u)\int_0^{\left(1+\frac{1}{q\alpha}\right)r_0}r^{p-1}\left(1-\frac{q\alpha}{r_0}(r-r_0)\right)^\frac{1}{\alpha}dr\cr
&=&pg(r_0u)\left(1+q\alpha\right)^\frac{1}{\alpha}\left(1+\frac{1}{q\alpha}\right)^pr_0^p\int_0^1s^{p-1}\left(1-s\right)^\frac{1}{\alpha}ds\cr
&=&pg(r_0u)\frac{\left(1+q\alpha\right)^{p+\frac{1}{\alpha}}}{(q\alpha)^p}r_0^p\beta\left(p,1+\frac{1}{\alpha}\right).
\end{eqnarray}
Moreover, for any $q>0$, the previous inequality is an equality if and only if $l_u=\left(1+\frac{1}{q\alpha}\right)r_0$ and  $v_u(r)=\tau_{u,q}(r)$ for every $0\leq r\leq l_u=\left(1+\frac{1}{q\alpha}\right)r_0$. That is, if $v_u$ is an affine function such that $v_u(l_u)=0$.

Consequently, for any $p,q>0$
$$
\frac{q\alpha}{\left(1+q\alpha\right)^{1+\frac{1}{p\alpha}}}{{p+\frac{1}{\alpha}}\choose{p}}^\frac{1}{p}\rho_{K_p(g)}(u)\leq g(r_0u)^\frac{1}{p}r_0,
$$
with equality if and only if $v_u$ is an affine function such that $v_u(l_u)=0$.

On the one hand, since $v_u$ is concave on $[0, l_u]$ and $\Vert g\Vert_\infty=g(0)=1$, we have that
\begin{eqnarray*}
g^\alpha\left(g^\frac{1}{p}(r_0u)r_0u\right)&=&v_u\left(g^\frac{1}{p}(r_0u)r_0\right)\geq  g^\frac{1}{p}(r_0u)v_u(r_0)+\left(1- g^\frac{1}{p}(r_0u)\right)v_u(0)\cr
&=&g^\frac{1}{p}(r_0u)g^\alpha(r_0u)+1- g^\frac{1}{p}(r_0u)\cr
&=&1-\left(g^\alpha(r_0u)\right)^\frac{1}{p\alpha}\left(1-g^\alpha(r_0u)\right),
\end{eqnarray*}
with equality if and only if $v_u$ is affine on $[0,r_0]$ and then $v_u(r)=\tau_{u,q}(r)$ for every $0\leq r\leq r_0$. On the other hand, since $v_u$ is concave on $[0, l_u]$, we have that $(v_u)_-$ is decreasing on $[0, l_u]$ and then
\begin{eqnarray*}
v_u(r_0)&=&v_u(0)+\int_0^{r_0}(v_u)_-(r)dr\geq v_u(0)+(v_u)_-(r_0)r_0\geq v_u(0)-q\alpha v_u(r_0)\cr
&=&1-q\alpha v_u(r_0)
\end{eqnarray*}
and then
$$
g^\alpha(r_0u)=v_u(r_0)\geq\frac{1}{1+q\alpha},
$$
with equality if and only if $(v_u)_-=-\frac{q\alpha}{r_0} v_u(r_0)$ for every $0\leq r<r_0$, in which case $v_u(r)=\tau_{u,q}(r)$ for every $0\leq r< r_0$. Since the function $h(x)=1-x^\frac{1}{p\alpha}(1-x)$ is decreasing in $\left[0,\frac{1}{1+p\alpha}\right]$ and is increasing in $\left[\frac{1}{1+p\alpha},1\right]$, we have that if $\frac{1}{1+p\alpha}\leq\frac{1}{1+q\alpha}$ (which happens whenever $0<q\leq p$)
\begin{eqnarray*}
g^\alpha\left(g^\frac{1}{p}(r_0u)r_0u\right)&\geq&1-\left(g^\alpha(r_0u)\right)^\frac{1}{p\alpha}\left(1-g^\alpha(r_0u)\right)=h(g^\alpha(r_0u))\geq h\left(\frac{1}{1+q\alpha}\right)\cr
&=&1-\left(\frac{1}{1+q\alpha}\right)^\frac{1}{p\alpha}\left(1-\frac{1}{1+q\alpha}\right)\cr
&=&1-\frac{q\alpha}{(1+q\alpha)^{1+\frac{1}{p\alpha}}}.
\end{eqnarray*}
Besides, there is equality if and only if $v_u(r)=\tau_{u,q}(r)$ for every $0\leq r\leq r_0$ and
$$
g^\alpha(r_0u)=v_u(r_0)=\frac{1}{1+q\alpha},
$$
which also happens if and only if $v_u(r)=\tau_{u,q}(r)$ for every $0\leq r< r_0$.

Therefore, if $0<q\leq p$, $g^\frac{1}{p}(r_0u)r_0u\in L_{1-\frac{q\alpha}{(1+q\alpha)^{1+\frac{1}{p\alpha}}}}(g)$ and, since $0\in L_{1-\frac{q\alpha}{(1+q\alpha)^{1+\frac{1}{p\alpha}}}}(g)$ as $\Vert g\Vert_\infty=g(0)=1$ and $L_{1-\frac{q\alpha}{(1+q\alpha)^{1+\frac{1}{p\alpha}}}}(g)$ is convex, we have that
$$g^\frac{1}{p}(r_0u)r_0\leq \rho_{L_{1-\frac{q\alpha}{(1+q\alpha)^{1+\frac{1}{p\alpha}}}}}(u),
$$
with equality if and only if $g^\frac{1}{p}(r_0u)r_0u\in\partial L_{1-\frac{q\alpha}{(1+q\alpha)^{1+\frac{1}{p\alpha}}}}$, which happens if and only if $v_u(r)=\tau_{u,q}(r)$ for every $0\leq r\leq r_0$. Thus, if $0<q\leq p$
$$
\frac{q\alpha}{\left(1+q\alpha\right)^{1+\frac{1}{p\alpha}}}{{p+\frac{1}{\alpha}}\choose{p}}^\frac{1}{p}\rho_{K_p(g)}(u)\leq \rho_{L_{1-\frac{q\alpha}{(1+q\alpha)^{1+\frac{1}{p\alpha}}}}}(u),
$$
with equality if and only  $v_u(r)=\tau_{u,q}(r)$ for every $0\leq r\leq l_u=\left(1+\frac{1}{q\alpha}\right)r_0$, i.e., if $v_u$ is an affine function such that $v_u(l_u)=0$. Since this happens for every $u\in S^{n-1}$,
$$
\frac{q\alpha}{\left(1+q\alpha\right)^{1+\frac{1}{p\alpha}}}{{p+\frac{1}{\alpha}}\choose{p}}^\frac{1}{p}K_p(g)\subseteq L_{1-\frac{q\alpha}{(1+q\alpha)^{1+\frac{1}{p\alpha}}}}(g)
$$
and, for any $0<q\leq p$, there is equality equality if and only if for every direction $u\in S^{n-1}$ $v_u$ is an affine function such that $v_u(l_u)=0$. That is, if $g(x)=\Vert g\Vert_\infty\left(1-\Vert x\Vert_K\right)^\frac{1}{\alpha}$ for every $x\in K$. Since the function $h_1(x)=\frac{x}{(1+x)^{1+\frac{1}{p\alpha}}}$ is continuous and increasing in $[0,p\alpha]$, it attains every value in $\left[0,\frac{p\alpha}{(1+p\alpha)^{1+\frac{1}{p\alpha}}}\right]$. Therefore, we have that for every $t\in\left[0,\frac{p\alpha}{(1+p\alpha)^{1+\frac{1}{p\alpha}}}\right]$.
$$
t{{p+\frac{1}{\alpha}}\choose{p}}^\frac{1}{p}K_p(g)\subseteq L_{1-t}(g)
$$
and, for any $t\in\left[0,\frac{p\alpha}{(1+p\alpha)^{1+\frac{1}{p\alpha}}}\right]$, there is equality if and only if $g(x)=\Vert g\Vert_\infty\left(1-\Vert x\Vert_K\right)^\frac{1}{\alpha}$.

Assume now that for some $u\in S^{n-1}$ there exists $r\in[0,\rho_K(u)]$ such that $g(ru)\neq\Vert g\Vert_\infty\left(1-\Vert ru\Vert_K\right)^\frac{1}{\alpha}$. Therefore, $v_u$ is not an affine function.

We first notice that the function $r_0(q)$ is continuous on $q\in (0,\infty)$. Indeed, let $(q_k)_{k=1}^\infty\subseteq(0,\infty)$ be a sequence converging to some $q\in(0,\infty)$. Let  $(q_{k_i})_{i=1}^\infty$ be any convergent subsequence of $(q_k)_{k=1}^\infty$ such that $r_0(q_{k_i})$ converges to some $\overline{r}\in[0,l_u]$, which exist since $(r_0(q_k))_{k=1}^\infty\subseteq(0,l_u)$. Since for every $i\in\N$, we have, by the definition of $r_0(q_{k_i})$, that
$$
r_0(q)^{q_{k_i}}v_u(r_0(q))\leq r_0(q_{k_i})^{q_{k_i}}v_u(r_0(q_{k_i})),
$$
taking limits as $i$ tends to $\infty$ we obtain that
$$
r_0(q)^{q}v_u(r_0(q))\leq \overline{r}^{q}v_u(\overline{r}).
$$
Therefore, by the definition of $r_0(q)$, $\overline{r}=r_0(q)$. Therefore,
$$
\liminf_{k\to\infty}r_0(q_k)=\limsup_{k\to\infty}r_0(q_k)=r_0(q)
$$
and then $r_0(q_k)$ converges to $r_0(q)$. Thus $r_0(q)$ is continuous on $q\in(0,\infty)$. Besides, if $(q_k)_{k=1}^\infty\subseteq(0,\infty)$ is a sequence converging to $0$ and for some subsequence $r_0(q_{k_i})$ converges to $l_u$ we would have that for every $r\in [0,l_u]$
$$
v_u(r)\leq \tau_{u,q_{k_i}}(r)=v_u(r_0(q_{k_i}))\left(1-\frac{q_{k_i}\alpha}{r_0(q_{k_i})}(r-r_0(q_{k_i}))\right),
$$
leading to $v_u(r)\leq 0$, which is a contradiction. Therefore, for any $p>0$ we have that $s:=\sup\{r_0(q)\,:\,q\in(0,p]\}<l_u$ and then, for every $p>0$ and every $0<q\leq p$ we have that $\frac{q\alpha}{r_0(q)}v_u(r_0(q))\leq-(v_u)_+(r_0(q))\leq -(v_u)_+(s)$ and then $\frac{q\alpha}{r_0(q)}v_u(r_0(q))$ is bounded in $q\in(0,p]$.

Assume that there is no $\varepsilon>0$ such that for every $0<q\leq p$ we have that
$$
\varepsilon<p\int_0^{\left(1+\frac{1}{q\alpha}\right)r_0(q)}r^{p-1}\tau_{u,q}^\frac{1}{\alpha}(r)dr-p\int_0^{l_u}r^{p-1}v_u^\frac{1}{\alpha}(r)dr.
$$
Then, we can find a sequence $(q_k)_{k=1}^\infty\subseteq(0,p]$ and, if necessary, extract from it further subsequences which we denote in the same way, such that
\begin{eqnarray*}
\lim_{k\to\infty}p\int_0^{\infty}r^{p-1}\left(\tau_{u,q_k}^\frac{1}{\alpha}(r)\chi_{\left(1+\frac{1}{q_k\alpha}\right)r_0(q_k)}(r)-v_u^\frac{1}{\alpha}(r)\chi_{[0,l_u]}(r)\right)dr=0,
\end{eqnarray*}
$(q_k)_{k=1}^\infty$ converges to some $q\in[0,p]$, $r_0(q_k)$ converges to some $\overline{r}\in[0,l_u]$, and $\frac{q_k\alpha}{r_0(q_k)}v_u(r_0)$ converges to some $\lambda\in[0,\infty)$. Since for every $r\in[0,\infty)$ we have that for every $k\in\N$
\begin{eqnarray*}
v_u(r)\chi_{[0,l_u]}(r)&\leq&\tau_{u,q_k}(r)\chi_{\left[0,\left(1+\frac{1}{q_k\alpha}\right)r_0(q_k)\right]}(r)\cr
&=&v_u(r_0(q_k))\left(1-\frac{q_k\alpha}{r_0(q_k)}(r-r_0(q_k))\right)\chi_{\left[0,\left(1+\frac{1}{q_k\alpha}\right)r_0(q_k)\right]}(r),
\end{eqnarray*}
taking limits as $k\to\infty$ we obtain that for almost every $r\in[0,\infty)$
$$
v_u(r)\chi_{[0,l_u]}(r)\leq \left(v_u(\overline{r})-\lambda (r-\overline{r})\right)\chi_{\left[0,\left(1+\frac{1}{q\alpha}\right)\overline{r}\right]}(r)
$$
and then, by Fatou's lemma,
\begin{align*}
& p\int_0^\infty r^{p-1}\left(\left(v_u(\overline{r})-\lambda (r-\overline{r})\right)\chi_{\left[0,\left(1+\frac{1}{q\alpha}\right)^\frac{1}{\alpha}\overline{r}\right]}(r)-v_u(r)^\frac{1}{\alpha}\chi_{[0,l_u]}(r)\right)dr\cr
&\leq\lim_{k\to\infty}p\int_0^{\infty}r^{p-1}\left(\tau_{u,q_k}^\frac{1}{\alpha}(r)\chi_{\left[0,\left(1+\frac{1}{q_k\alpha}\right)r_0(q_k)\right]}(r)-v_u^\frac{1}{\alpha}(r)\chi_{[0,l_u]}(r)\right)dr=0.
\end{align*}
Since the integrand in the first integral is non-negative, by continuity of the following functions, we have that for every  $r\in[0,\infty)$
$$
v_u(r)\chi_{[0,l_u]}(r)= \left(v_u(\overline{r})-\lambda (r-\overline{r})\right)\chi_{\left[0,\left(1+\frac{1}{q\alpha}\right)\overline{r}\right]}(r)
$$
and then $v_u$ is an affine function. Therefore, if $v_u$ is not linear, there exists $\overline{\varepsilon}>0$ such that for every $0<q\leq p$
$$
p\int_0^{l_u}r^{p-1}v_u^\frac{1}{\alpha}(r)dr+\overline{\varepsilon}\leq pg(r_0u)\frac{\left(1+q\alpha\right)^{p+\frac{1}{\alpha}}}{(q\alpha)^p}r_0^p\beta\left(p,1+\frac{1}{\alpha}\right)
$$
and then there exists $\varepsilon>0$ such that for every $0<q\leq p$
\begin{eqnarray*}
\frac{q\alpha}{\left(1+q\alpha\right)^{1+\frac{1}{p\alpha}}}{{p+\frac{1}{\alpha}}\choose{p}}^\frac{1}{p}\left(\rho_{K_p(g)}(u)+\varepsilon\right)&\leq&\frac{q\alpha}{\left(1+q\alpha\right)^{1+\frac{1}{p\alpha}}}{{p+\frac{1}{\alpha}}\choose{p}}^\frac{1}{p}\left(\rho_{K_p(g)}^p(u)+\overline{\varepsilon}\right)^\frac{1}{p}\cr
&\leq& g(r_0u)^\frac{1}{p}r_0,
\end{eqnarray*}
Proceeding now as in the proof of the inequality we obtain the result.
\end{proof}

\section{Brunn-Minkowski inequality for $\theta$-convolution bodies}\label{sec:BM_ThetaConvolution}

In this section we are going to prove Theorem \ref{thm:BM_Convolution_improved}.

\begin{proof}[Proof of Theorem \ref{thm:BM_Convolution_improved}]
Let $K,L\subseteq\R^n$ be a pair of convex bodies. By \eqref{eq:translation} we can assume, without loss of generality, that
$$
M(K,L)=\max_{z\in\R^n}|K\cap(z-L)|=|K\cap (-L)|.
$$
Then, the function $g_{K,L}:K+L\to[0,\infty)$ given by $g_{K,L}(z)=|K\cap(z-L)|$, which is a continuous $\frac{1}{n}$-concave function, verifies that $\Vert g_{K,L}\Vert_\infty=g_{K,L}(0)$ and $g_{K,L}(z)=0$ for every $z\in\partial(K+L)$.

By Theorem \ref{thm:BallsBodiesInclusion} with $p=n$, we have that for every $0\leq t\leq\frac{1}{4}$
$$
t{{2n}\choose{n}}^\frac{1}{n}K_n(g_{K,L})\subseteq L_{1-t}(g_{K,L}).
$$
Equivalently, taking $t=1-\theta^\frac{1}{n}$ we have that if $\left(\frac{3}{4}\right)^n\leq\theta<1$
$$
{{2n}\choose{n}}^\frac{1}{n}K_n(g_{K,L})\subseteq \frac{L_{\theta^\frac{1}{n}}(g_{K,L})}{1-\theta^\frac{1}{n}}=\frac{K+_\theta L}{1-\theta^\frac{1}{n}}.
$$
Taking volumes, and taking into account that
$$
|K_n(g_{K,L})|=\int_{\R^n}\frac{g_{K,L}(x)}{g_{K,L}(0)}dx=\int_{\R^n}\frac{ g_{K,L}(x)}{\Vert g_{K,L}\Vert_\infty}dx=\frac{|K||L|}{M(K,L)}
$$
we obtain that for every $\left(\frac{3}{4}\right)^n\leq\theta<1$
\begin{eqnarray*}
\left|\frac{K+_\theta L}{1-\theta^\frac{1}{n}}\right|^\frac{1}{n}&\geq&{{2n}\choose{n}}^\frac{1}{n}|K_n(g_{K,L})|^\frac{1}{n}={{2n}\choose{n}}^\frac{1}{n}\frac{|K|^\frac{1}{n}|L|^\frac{1}{n}}{M(K,L)^\frac{1}{n}}\geq{{2n}\choose{n}}^\frac{1}{n}\frac{|K|^\frac{1}{n}|L|^\frac{1}{n}}{\min\{|K|^\frac{1}{n},|L|^\frac{1}{n}\}}\cr
&=&{{2n}\choose{n}}^\frac{1}{n}\max\{|K|^\frac{1}{n},|L|^\frac{1}{n}\}\geq\frac{1}{2}{{2n}\choose{n}}^\frac{1}{n}(|K|^\frac{1}{n}+|L|^\frac{1}{n}).
\end{eqnarray*}
Assume that there is equality for some $\left(\frac{3}{4}\right)^n\leq\theta_0<1$. Then, by the equality cases in Theorem \ref{thm:BallsBodiesInclusion}, we have that
$$
g_{K,L}(x)=M(K,L)\left(1-\Vert x\Vert_{K+L}\right)^n\quad\forall x\in K+L.
$$
Otherwise, we have that for every $0\leq t\leq\frac{1}{4}$
$$
t{{2n}\choose{n}}^\frac{1}{n}K_n(g_{K,L})\subsetneq L_{1-t}(g_{K,L}).
$$
or, equivalently, taking $t=1-\theta^\frac{1}{n}$ we have that for every $\left(\frac{3}{4}\right)^n\leq\theta<1$
$$
{{2n}\choose{n}}^\frac{1}{n}K_n(g_{K,L})\subsetneq \frac{L_{\theta^\frac{1}{n}}(g_{K,L})}{1-\theta^\frac{1}{n}}=\frac{K+_\theta L}{1-\theta^\frac{1}{n}}.
$$
and then, in particular,
$$
\left|\frac{K+_{\theta_0} L}{1-\theta_0^\frac{1}{n}}\right|^\frac{1}{n}>{{2n}\choose{n}}^\frac{1}{n}|K_n(g_{K,L})|^\frac{1}{n}\geq\frac{1}{2}{{2n}\choose{n}}^\frac{1}{n}(|K|^\frac{1}{n}+|L|^\frac{1}{n}),
$$
which contradicts the equality at $\theta_0$.

Therefore, if there is equality for some $\left(\frac{3}{4}\right)^n\leq\theta_0\leq1$, we have that for every $0\leq\theta<1$
$$
K+_\theta L=(1-\theta^\frac{1}{n})(K+L)
$$
and then, as mentioned in Section \ref{subsec:Covariogram}, $K=-L$ is a simplex.

For any $0\leq\theta\leq \left(\frac{3}{4}\right)^n$ we have that $0\leq\theta^\frac{1}{n}\leq\frac{3}{4}$ and
$$
\theta^\frac{1}{n}=\left(\frac{4}{3}\theta^\frac{1}{n}\right)\frac{3}{4}+\left(1-\frac{4}{3}\theta^\frac{1}{n}\right)0.
$$
Since $g_{K,L}^\frac{1}{n}$ is concave on $K+L$, we have that
\begin{eqnarray*}
K+_\theta L&=&L_{\theta^\frac{1}{n}}(g_{K,L})\supseteq\frac{4}{3}\theta^\frac{1}{n}L_{\frac{3}{4}}(g_{K,L})+\left(1-\frac{4}{3}\theta^\frac{1}{n}\right)L_0(g_{K,L})\cr
&\supseteq&\frac{1}{3}\theta^\frac{1}{n}{{2n}\choose{n}}^\frac{1}{n}K_n(g_{K,L})+\left(1-\frac{4}{3}\theta^\frac{1}{n}\right)(K+L),
\end{eqnarray*}
where the last inclusion relation is a consequence of Theorem \ref{thm:BallsBodiesInclusion}. Taking volumes and using Brunn-Minkowski inequality we obtain that
\begin{eqnarray*}
|K+_\theta L|^\frac{1}{n}&\geq& \frac{1}{3}\theta^\frac{1}{n}{{2n}\choose{n}}^\frac{1}{n}|K_n(g_{K,L})|^\frac{1}{n}+\left(1-\frac{4}{3}\theta^\frac{1}{n}\right)|K+L|^\frac{1}{n}\cr
&\geq&\frac{1}{3}\theta^\frac{1}{n}{{2n}\choose{n}}^\frac{1}{n}\frac{|K|^\frac{1}{n}|L|^\frac{1}{n}}{M(K,L)^\frac{1}{n}}+\left(1-\frac{4}{3}\theta^\frac{1}{n}\right)\left(|K|^\frac{1}{n}+|L|^\frac{1}{n}\right)\cr
&\geq&\frac{1}{3}\theta^\frac{1}{n}{{2n}\choose{n}}^\frac{1}{n}\max\{|K|^\frac{1}{n},|L|^\frac{1}{n}\}+\left(1-\frac{4}{3}\theta^\frac{1}{n}\right)\left(|K|^\frac{1}{n}+|L|^\frac{1}{n}\right)\cr
&\geq&\frac{1}{6}\theta^\frac{1}{n}{{2n}\choose{n}}^\frac{1}{n}\left(|K|^\frac{1}{n}+|L|^\frac{1}{n}\right)+\left(1-\frac{4}{3}\theta^\frac{1}{n}\right)\left(|K|^\frac{1}{n}+|L|^\frac{1}{n}\right)\cr
&=&\left(1-\left(\frac{4}{3}-\frac{1}{6}{{2n}\choose{n}}^\frac{1}{n}\right)\theta^\frac{1}{n}\right)\left(|K|^\frac{1}{n}+|L|^\frac{1}{n}\right).
\end{eqnarray*}
\end{proof}

Finally, let us point out that, as a consequence of the estimates obtained in the proof of Theorem \ref{thm:BM_Convolution_improved}, we can obtain another proof of inequality \eqref{eq:ZhangTwoBodies}:
\begin{cor}
Let $K,L\subseteq\R^n$ be a pair of convex bodies such that $M(K,L)=|K\cap(-L)|$. Then
$$
|C_1(K,L)|^\frac{1}{n}\geq\frac{1}{n}{{2n}\choose{n}}^\frac{1}{n}\frac{|K|^\frac{1}{n}|L|^\frac{1}{n}}{M(K,L)^\frac{1}{n}},
$$
with equality if and only if $K=-L$ is a simplex.
\end{cor}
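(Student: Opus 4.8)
The statement is \eqref{eq:ZhangTwoBodies} written after taking $n$-th roots, and the plan is to obtain it as a limiting case of the inclusion already proved inside Theorem \ref{thm:BM_Convolution_improved}. As in that proof we may assume $M(K,L)=|K\cap(-L)|$, which is precisely the hypothesis here, so that $g_{K,L}$ is a continuous $\frac{1}{n}$-concave function with $\Vert g_{K,L}\Vert_\infty=g_{K,L}(0)$ vanishing on $\partial(K+L)$, and Theorem \ref{thm:BallsBodiesInclusion} applies with $p=n$ (so that $p\alpha=1$ and $\frac{p\alpha}{(1+p\alpha)^{1+\frac{1}{p\alpha}}}=\frac14$), giving ${{2n}\choose{n}}^\frac{1}{n}K_n(g_{K,L})\subseteq\frac{K+_\theta L}{1-\theta^{1/n}}$ for every $\theta\in[(3/4)^n,1)$.

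First I would write $\frac{K+_\theta L}{1-\theta}=\frac{1-\theta^{1/n}}{1-\theta}\cdot\frac{K+_\theta L}{1-\theta^{1/n}}$ and note that $\frac{1-\theta^{1/n}}{1-\theta}\to\frac1n$ as $\theta\to1^-$. Fix $\theta_0\in[(3/4)^n,1)$. Since $\frac{K+_\theta L}{1-\theta^{1/n}}$ is an increasing family of convex bodies in $\theta$ (recalled in Section \ref{subsec:Covariogram}), for $\theta\in(\theta_0,1)$ we have $\frac{K+_\theta L}{1-\theta}\supseteq\frac{1-\theta^{1/n}}{1-\theta}\cdot\frac{K+_{\theta_0} L}{1-\theta_0^{1/n}}$; letting $\theta\to1^-$ and using that the limit defining $C_1(K,L)$ exists under the present hypothesis (cf. \cite{T}, \cite{AJV}) together with the fact that inclusions between convex bodies are preserved under Hausdorff limits, we get $C_1(K,L)\supseteq\frac1n\cdot\frac{K+_{\theta_0} L}{1-\theta_0^{1/n}}$. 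Combining this with the inclusion ${{2n}\choose{n}}^\frac{1}{n}K_n(g_{K,L})\subseteq\frac{K+_{\theta_0} L}{1-\theta_0^{1/n}}$ above yields
$$
\frac1n{{2n}\choose{n}}^\frac{1}{n}K_n(g_{K,L})\subseteq C_1(K,L),
$$
and taking volumes, together with $|K_n(g_{K,L})|=\frac{|K||L|}{M(K,L)}$, gives the claimed inequality.

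For the equality cases I would follow the argument in the proof of Theorem \ref{thm:BM_Convolution_improved} verbatim. If $g_{K,L}$ is not the extremal function $M(K,L)(1-\Vert\cdot\Vert_{K+L})^n$, then Theorem \ref{thm:BallsBodiesInclusion} gives a strict inclusion ${{2n}\choose{n}}^\frac{1}{n}K_n(g_{K,L})\subsetneq\frac{K+_{\theta_0} L}{1-\theta_0^{1/n}}$, and a strict inclusion of convex bodies forces a strict inequality of volumes (the complement is relatively open and nonempty, hence meets the interior), so the inequality above is strict. If $g_{K,L}$ is extremal, then by \cite{AJV} $K=-L$ is a simplex, the equality case of Theorem \ref{thm:BallsBodiesInclusion} shows ${{2n}\choose{n}}^\frac{1}{n}K_n(g_{K,L})=\frac{K+_\theta L}{1-\theta^{1/n}}$ for every $\theta\in[(3/4)^n,1)$, hence this family is constant, $C_1(K,L)=\frac1n{{2n}\choose{n}}^\frac{1}{n}K_n(g_{K,L})$, and equality holds.

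The only genuinely delicate point is the passage to the limit, where one must invoke both the existence of the limit defining $C_1(K,L)$ and the continuity of set inclusion under Hausdorff convergence; everything else is just a recombination of estimates already present in the proof of Theorem \ref{thm:BM_Convolution_improved}.
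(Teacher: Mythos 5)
Your proof is correct and follows essentially the same route as the paper: apply the inclusion from Theorem \ref{thm:BallsBodiesInclusion} with $p=n$ to $g_{K,L}$, use $|K_n(g_{K,L})|=\frac{|K||L|}{M(K,L)}$, let $\theta\to1^-$ via $\lim_{\theta\to1^-}\frac{K+_\theta L}{1-\theta^{1/n}}=nC_1(K,L)$, and handle equality through the extremal case of Theorem \ref{thm:BallsBodiesInclusion} plus the characterization from \cite{AJV}. The only cosmetic difference is that you pass to the limit on the set inclusion (invoking preservation of inclusions under Hausdorff limits), whereas the paper first takes volumes and then passes to the limit on the resulting scalar inequality, which avoids appealing to Hausdorff continuity of inclusion.
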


\begin{proof}
We have seen in the previous proof that for every $\left(\frac{3}{4}\right)^n\leq\theta<1$
$$
\left|\frac{K+_\theta L}{1-\theta^\frac{1}{n}}\right|^\frac{1}{n}\geq{{2n}\choose{n}}^\frac{1}{n}\frac{|K|^\frac{1}{n}|L|^\frac{1}{n}}{M(K,L)^\frac{1}{n}}.
$$
Taking limit as $\theta\to1^-$ and taking into account that $\frac{K+_\theta L}{1-\theta^\frac{1}{n}}$ is an increasing family of convex bodies in $\theta$ such that
$$
\lim_{\theta\to1^-}\frac{K+_\theta L}{1-\theta^\frac{1}{n}}=\lim_{\theta\to1^-}\frac{1-\theta}{1-\theta^\frac{1}{n}}\frac{K+_\theta L}{1-\theta}=nC_1(K,L),
$$
we obtain the result. Moreover, if $K=-L$ is a simplex, the inequality above becomes Zhang's inequality \eqref{eq:ZhangsInequality}. On the other hand, if there is equality, by the equality case in Theorem \ref{thm:BallsBodiesInclusion}, then necessarily $g_{K,L}=M(K,L)(1-\Vert x\Vert_{K+L})^n$. Therefore, for every $\theta\in[0,1]$ we have that
$$
K+_\theta L=(1-\theta^\frac{1}{n})(K+L)
$$
and then $K=-L$ is a simplex.
\end{proof}

\end{document}